\newtheorem{definition}{Definition}[section]
\newtheorem{theorem}[definition]{Theorem}
\newtheorem{lemma}[definition]{Lemma}
\newtheorem{prop}[definition]{Proposition}
\newtheorem{remark}[definition]{Remark}
\newcommand{\M}{\mathrm{Mult}(H_k)}
\newcommand{\C}{\mathbb{C}}
\newcommand{\D}{\mathbb{D}}
\newcommand{\B}{\mathbb{B}}
\newcommand{\T}{\mathbb{T}}
\newcommand{\bM}{\mathbb{M}}
\newcommand{\A}{\mathcal{A}}
\newcommand{\E}{\mathcal{E}}
\newcommand{\mcH}{\mathcal{H}}
\newcommand{\mf}{\mathfrak{z}}
\newcommand{\mfa}{\mathfrak{a}}
\newcommand{\bb}{\hat{b}(\lambda)}
\begin{document}

\title{Isometric Gelfand transforms of complete Nevanlinna--Pick spaces}
\author{Kenta Kojin}
\address{
Graduate School of Mathematics, Nagoya University, 
Furocho, Chikusaku, Nagoya, 464-8602, Japan
}
\email{m20016y@math.nagoya-u.ac.jp}
\date{\today}

\begin{abstract}
We show that any complete Nevanlinna--Pick space whose multiplier algebra has isometric Gelfand transform (or commutative C*-envelope) is essentially the Hardy space on the open unit disk.
\end{abstract}


 \maketitle


\section{Introduction}


The celebrated Nevenlinna--Pick interpolation theorem \cite{Nev1919, Pic} suggests that the behavior of a multiplier of the Hardy space $H^2$ on $\D=\{z\in\C\;|\;|z|<1\}$ is described by a certain positive semidefinite matrix.
Here, $H^2$ is the reproducing kernel Hilbert space (RKHS for short) of holomorphic functions on $\D$ whose Taylor coefficients at $0$ are square-summable.
Agler \cite{Agl88} saught for more RKHS's on which a Nevanlinna--Pick type interpolation theorem holds. 
In 2000, Agler and McCarthy \cite{AM2000} established a characterization theorem for RKHS's on which the matrix-valued Pick interpolation theorem holds. Such a space is called a complete Nevanlinna--Pick space. The Hardy space is a prototypical example.  They proved that an irreducible RKHS is a complete Nevanlinna--Pick space if and only if it can be embedded into the Drury--Arveson space $H_{\E}$, which can be understood as a 
multivariable Hardy space on the open unit ball $\B_{\E}$ in a Hilbert space $\E$. 
(See Theorem \ref{universality} its the precise statement.) 
Hence, those spaces possess some of the fine structure of $H^2$ (see e.g., \cite{AMp, Hartz2022}). 

In this paper, we always assume that an RKHS $H_k$ has dimension greater than one. The following results motivates us to study irreducible complete Nevanlinna--Pick spaces whose multiplier algebras have isometric Gelfand transform.

\begin{itemize}
\item Clou\^{a}tre and Timko \cite[Section 6]{CT} showed that the Gelfand transform of the multiplier algebra $\M$ of an irreducible complete Nevanlinna--Pick space $H_k$ often fails to be isometric. 

\item Hartz \cite{Har2015} proved that every irreducible complete Nevanlinna--Pick space whose multipliers are hyponormal is isomorphic to $H^2$ as an RKHS.  

\item The author \cite[Theorem 3.9]{Koj} proved that every complete Nevanlinna--Pick space such that the multiplier norm is equal to the uniform norm for all multipliers must be isomorphic to $H^2$ as an RKHS.
\end{itemize}
We prove that $H^2$ is essentially a unique irreducible complete Nevanlinna--Pick space whose multiplier algebra has isometric Gelfand transform. 
We note that the above mentioned Hartz and the author's results immediately follow from this fact.

\begin{theorem}\label{theorem:1-1}
Let $H_k$ be an irreducible complete Nevanlinna--Pick space on a set $X$ normalized at $x_0\in X$. We denote the set of all characters of $\M$ by $\Delta(\M)$. Then the Gelfand transform $g:\M\rightarrow C(\Delta(\M))$ is isometric if and only if there exist a set of uniqueness $A\subset\D$ for $H^2$ and a bijection $j:X\rightarrow A$ such that $j(x_0)=0$ and 
\begin{equation}\label{equation:1-1}
k(x,y)=\frac{1}{1-j(x)\overline{j(y)}}\;\;\;\;(x,y\in X)
\end{equation}
hold. Hence,
\begin{equation*}
H^2\rightarrow H_k,\;\;\;\;f\mapsto f\circ j
\end{equation*}
is a unitary operator. Moreover, in this case, 
\begin{equation*}
H^{\infty}(\D)\rightarrow \M,\;\;\;\;\psi\mapsto\psi\circ j
\end{equation*}
is a unital completely isometric isomorphism. In particular, the Gelfand transform of $\M$ is completely isometric.
\end{theorem}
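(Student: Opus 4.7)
The proof splits into the straightforward ``if'' direction and the substantive ``only if'' direction. For the ``if'' direction, assume $k$ has the form \eqref{equation:1-1} with $A = j(X) \subset \D$ a set of uniqueness for $H^2$. The identity $k(x,y) = (1 - j(x)\overline{j(y)})^{-1}$ exhibits $k$ as the pullback along $j$ of the Szeg\H{o} kernel restricted to $A$, so $f \mapsto f \circ j$ intertwines reproducing kernels and is isometric from $H^2$ onto $H_k$: isometric because the kernels match, injective because $A$ is a set of uniqueness, surjective because reproducing kernels span. The induced map $\HH \to \M$, $\psi \mapsto \psi \circ j$, is a unital algebra homomorphism whose complete isometry follows by comparing matrix-valued Pick matrices on the two sides (they coincide under $j$, and the set-of-uniqueness hypothesis guarantees no norm is lost in restricting to $A$). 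Since $\|f^n\|_\infty = \|f\|_\infty^n$ on $\HH$, this algebra is uniform and its Gelfand transform is isometric, which yields the final sentences of the theorem.

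For the ``only if'' direction, invoke the Agler--McCarthy universality theorem (Theorem~\ref{universality}) to fix an embedding $b \colon X \to \B_\E$ with $b(x_0) = 0$ and $k(x,y) = (1 - \langle b(x), b(y)\rangle)^{-1}$; irreducibility forces $b$ to be injective. The argument will split into two steps: (i) show $b(X)$ lies in a one-dimensional subspace $\C \xi \subset \E$, and (ii) show that the resulting set $A := \{\langle b(x), \xi\rangle : x \in X\} \subset \D$ is a set of uniqueness for $H^2$. Step (ii) should follow once (i) is in hand: if $A$ were not a set of uniqueness there would exist a nonzero Blaschke product $B \in \HH$ with $B|_A = 0$, producing a nontrivial element in the kernel of the restriction map $\HH \to \M$, and a careful character analysis on the resulting quotient algebra should then detect this extra relation and break the isometric Gelfand hypothesis.

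Step (i), the core of the proof, proceeds by contraposition. The hypothesis of isometric Gelfand transform is equivalent to every multiplication operator $M_\varphi$ being \emph{normaloid}, i.e.\ $\|M_\varphi\| = \lim_n \|M_\varphi^n\|^{1/n}$. Suppose $\overline{\mathrm{span}}\, b(X)$ has complex dimension at least two and pick orthonormal $e_1, e_2$ in this span. Set $\varphi_i(x) := \langle b(x), e_i\rangle \in \M$, the contractive multipliers obtained by restricting the coordinate functionals on $H_\E$. The obstruction model lives in the ambient Drury--Arveson space on $\B_2$: the polynomial $z_1 z_2$ satisfies $\|z_1 z_2\|_{\mathrm{Mult}} \geq \|z_1 z_2\|_{H_2} = 1/\sqrt{2}$, while its joint spectrum is contained in $\overline{\B_2}$ so its spectral radius is at most $\sup_{\overline{\B_2}} |\lambda_1 \lambda_2| = 1/2$. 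The plan is to transfer this norm-versus-spectral-radius gap to $\M$: lower-bound $\|M_{\varphi_1 \varphi_2}\|_\M$ by testing on a reproducing kernel at a point of $X$ where both $\varphi_i$ are substantial, and upper-bound the spectral radius by showing that each character of $\M$ sends the pair $(\varphi_1, \varphi_2)$ to a point of $\overline{\B_2}$.

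The main obstacle is that $\M$ is only a quotient of $\mathrm{Mult}(H_\E)$, so the clean $1/\sqrt{2}$ versus $1/2$ gap from the full Drury--Arveson space can collapse if $b(X)$ is sparsely distributed. One may have to replace $z_1 z_2$ by a polynomial $p(z_1, z_2)$ chosen adaptively to the geometry of $b(X)$, using the complete Nevanlinna--Pick extension theorem to manufacture interpolation data that witnesses the gap. Pinning down the spectrum of $(M_{\varphi_1}, M_{\varphi_2})$ in $\M$ precisely enough to keep the spectral-radius upper bound is an independent subtlety, since $\Delta(\M)$ can be strictly larger than $\overline{b(X)}$; I expect these quantitative estimates, and the interplay between $\Delta(\M)$ and the geometry of $b(X)$, to be the heart of the argument.
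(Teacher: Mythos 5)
Your overall skeleton is the same as the paper's: the ``if'' direction via pullback of the Szeg\H{o} kernel and complete isometry of $\psi\mapsto\psi\circ j$ is essentially the paper's argument, and for the ``only if'' direction the paper likewise splits into (i) showing $b(X)$ lies in a one--dimensional subspace and (ii) showing the resulting $A\subset\D$ is a set of uniqueness. But your step (i) is not actually proved, and the route you sketch has a structural defect. You propose to exhibit a gap between $\|M_{\varphi_1\varphi_2}\|_{\M}$ and its spectral radius, lower--bounding the norm ``by testing on a reproducing kernel at a point of $X$.'' Testing on (normalized) kernel functions only ever produces $\sup_{x\in X}|\varphi_1(x)\varphi_2(x)|$, and point evaluations at $x\in X$ are themselves characters of $\M$, so this quantity is already dominated by the Gelfand sup-norm: a kernel-vector lower bound can never witness non-isometry. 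To beat the character sup you would need a lower bound on the quotient norm coming from non-kernel vectors, which is exactly the obstacle you flag and do not resolve (``I expect these quantitative estimates \dots to be the heart of the argument''). That is the missing idea, and it is genuinely missing.

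The paper closes step (i) by running the hypothesis in the opposite direction: it never seeks a multiplier whose norm exceeds its Gelfand norm. Instead, it uses the isometric Gelfand transform as an \emph{upper} bound to show that $\langle b(\cdot),\hat b(\lambda)\rangle+\tfrac12\langle b(\cdot),e_i\rangle^2$ is a \emph{contractive} multiplier of $H_k$ (characters of $\M$ pull back through $C_b$ to characters of $\mathrm{Mult}(H_{\E}^2)$, restrict to the algebra $\A(e_1,e_2)\cong\A_2$ whose character space is $\overline{\B_2}$, and one finishes with the elementary inequality $t+\tfrac12(1-t^2)\le 1$ on $[0,1]$). The decisive input is then Lemma \ref{lemma2} (Hartz's two-point extremal problem): the contractive multiplier vanishing at $x_0$ and maximizing $\mathrm{Re}\,\phi(\lambda)$ is \emph{unique}, and both $\langle b(\cdot),\hat b(\lambda)\rangle$ and its perturbation attain the extremal value $(1-1/k(\lambda,\lambda))^{1/2}$ at $\lambda$; uniqueness forces $\langle b(\cdot),e_i\rangle\equiv0$ for every $e_i\perp\hat b(\lambda)$, i.e.\ one-dimensionality. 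This rigidity argument completely sidesteps the quotient-norm lower bound your plan requires. Your step (ii) is also under-specified: ``a careful character analysis should then detect this extra relation'' is doing real work, and the paper supplies it with a nontrivial citation --- Clou\^atre's theorem that if the Gelfand transform of $H^{\infty}(\D)/\theta H^{\infty}(\D)$ is isometric then the Blaschke product $\theta$ must be an automorphism of $\D$, which contradicts $A$ having more than one point.
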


Clou\^{a}tre and Timko \cite[Theorem 2.4]{CT} proved that the Gelfand transform of a norm-closed unital commutative operator algebra is completely isometric if and only if its C*-envelope is commutative. 
Here, the {\bf C*-envelope} of a unital operator algebra $\A$ is a C*-algebra $C^*_e(\A)$ together with a unital completely isometric homomorphism $\iota:\A\rightarrow C^*_e(\A)$ with the following two properties:
\begin{enumerate}
\item $C^*_e(\A)=C^*(\iota(\A))$
\item whenever $\rho:\A\rightarrow B(H)$ is any other unital completely isometric homomorphism, then there exists a surjective $*$-homomorphism $\pi:C^*(\rho(\A))\rightarrow C^*_e(\A)$ such that $\pi(\rho(a))=\iota(a)$ for all $a\in\A$.
\end{enumerate}
Therefore, $C^*_e(\A)$ is the ``smallest'' C*-algebra generated by a completely isometric copy of $\A$. 

Combining Theorem \ref{theorem:1-1} and \cite[Theorem 2.4]{CT}, we obtain the following result:

\begin{theorem}
    Let $H_k$ be an irreducible complete Nevanlinna--Pick space on a set $X$ normalized at $x_0\in X$. The following statements are equivalent:
    \begin{enumerate}
    \item The Gelfand transform of $\M$ is isometric.
    \item The Gelfand transform of $\M$ is completely isometrtic.
    \item The C*-envelope of $\M$ is commutative.
    \item There exist a set of uniqueness $A\subset\D$ for $H^2$ and a bijection $j:X\rightarrow A$ such that $j(x_0)=0$ and 
    \begin{equation*}
    k(x,y)=\frac{1}{1-j(x)\overline{j(y)}}\;\;\;\;(x,y\in X).
    \end{equation*}
    \end{enumerate}
    Moreover, in this case, 
    \begin{equation*}
    H^2\rightarrow H_k,\;\;\;\;f\mapsto f\circ j
    \end{equation*}
    is a unitary operator and
    \begin{equation*}
    H^{\infty}(\D)\rightarrow \M,\;\;\;\;\psi\mapsto\psi\circ j
    \end{equation*}
    is a unital completely isometric isomorphism.
\end{theorem}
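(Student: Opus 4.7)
The plan is to derive this theorem as a short corollary of Theorem~\ref{theorem:1-1} combined with \cite[Theorem 2.4]{CT}; no fresh analysis of the kernel $k$ is needed.

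I would begin by noting that Theorem~\ref{theorem:1-1} already supplies the equivalence $(1) \Leftrightarrow (4)$, together with the ``moreover'' clauses about the unitary $H^2 \to H_k$ and the completely isometric isomorphism $\HH \to \M$. The concluding ``in particular'' sentence of that theorem, which asserts that the Gelfand transform of $\M$ is in fact completely isometric, upgrades this to the implication $(1) \Rightarrow (2)$. Since $(2) \Rightarrow (1)$ is trivial (a completely isometric map is a fortiori isometric), I obtain the chain $(1) \Leftrightarrow (2) \Leftrightarrow (4)$, along with both ``moreover'' conclusions of the present statement.

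Next, I would fold in condition (3) by invoking \cite[Theorem 2.4]{CT}, which states that for any norm-closed unital commutative operator algebra $\A$ the Gelfand transform is completely isometric if and only if the C*-envelope $C^*_e(\A)$ is commutative. Applying this to $\A = \M$---which is unital, commutative, and norm-closed as a subalgebra of $B(H_k)$---yields $(2) \Leftrightarrow (3)$ and closes the four-way equivalence.

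The main obstacle has, of course, already been absorbed into Theorem~\ref{theorem:1-1}: the genuine analytic content is the direction ``isometric Gelfand transform $\Rightarrow$ Szeg\H{o}-type kernel,'' which is proved there. For the present statement the only verification is that $\M$ meets the hypotheses of \cite[Theorem 2.4]{CT}, after which the argument reduces to a mechanical diagram chase.
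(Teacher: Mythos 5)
Your proposal is correct and matches the paper's own derivation exactly: the paper states this result as an immediate consequence of combining Theorem~\ref{theorem:1-1} (which gives $(1)\Leftrightarrow(4)$, the upgrade to $(2)$, and both ``moreover'' clauses) with \cite[Theorem 2.4]{CT} (which gives $(2)\Leftrightarrow(3)$ for the norm-closed unital commutative operator algebra $\M$). No further comment is needed.
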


We also discuss the Gelfand transform of a suitable norm-closed subalgebra contained in $\M$ generated by ``coordinate functions'' (see Remark \ref{remark:4-3}). This algebra is a generalization of the disk algebra.


\section{Complete Nevanlinna--Pick spaces}
 Let $H_k$ be an RKHS on a set X with reproducing kernel $k:X\times X\rightarrow\C$. We always assume that $X$ is neither the empty set nor a singleton (in the latter case, $H_k=\C$), and $H_k$ is irreducible in the following sense: For any two points $x,y\in X$, the reproducing kernel $k$ satisfies $k(x,y)\ne 0$, and if $x\ne y$, then $k(\cdot, x)$ and $k(\cdot, y)$ are linearly independent in $H_k$. A function $\phi:X\rightarrow\C$ is called a multiplier of $H_k$ if $\phi f\in H_k$ for all $f\in H_k$. In this case, the multiplication operator $M_{\phi}:H_k\rightarrow H_k$ defined by $M_{\phi}f:=\phi f$ is bounded by the closed graph theorem, and the multiplier norm $\|\phi\|_{\M}$ is defined to be the operator norm of $M_{\phi}$. We know the following useful characterization of contractive matrix-valued multipliers:

 \begin{lemma}[$\mbox{\cite[Theorem 3.1(1)$\Leftrightarrow$(2)]{BTV}}$]\label{lemma:1-1}
 Let $H_k$ be an RKHS on a set $X$. A 
matrix-valued function $\Phi:X\rightarrow\bM_m(\C)$ falls in $\bM_m(\C)\otimes \M$ and satisfies $\|\Phi\|_{\mathbb{M}_m(\C)\otimes\M}\le 1$ if and only if 
 \begin{equation*}
     (I_{\C^m}-\Phi(x)\Phi(y)^*)k(x,y)\;\;\;\;(x,y\in X)
 \end{equation*}
 is positive semidefinite.
 \end{lemma}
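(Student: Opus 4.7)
The plan is to translate contractivity $\|\Phi\|_{\bM_m(\C) \otimes \M} \le 1$ into positivity of $I - M_\Phi M_\Phi^{*}$ on the Hilbert space $H_k \otimes \C^m$, and then recognize the stated kernel positivity as exactly that inequality tested against finite linear combinations of reproducing kernels. The bridge between the two formulations is the standard adjoint formula
$$M_\Phi^{*}\bigl(k(\cdot, y) \otimes v\bigr) = k(\cdot, y) \otimes \Phi(y)^{*} v \qquad (y \in X,\; v \in \C^m),$$
which is immediate from the reproducing property together with the scalar identity $\langle M_\Phi f,\, k(\cdot, y) \otimes v\rangle = \langle f(y),\, \Phi(y)^{*} v \rangle_{\C^m}$.

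For the forward direction, if $\Phi$ is a contractive matrix multiplier then $I - M_\Phi M_\Phi^{*} \ge 0$ on $H_k \otimes \C^m$. Expanding $\langle (I - M_\Phi M_\Phi^{*})\xi,\, \xi \rangle \ge 0$ for $\xi = \sum_j c_j\, k(\cdot, y_j) \otimes v_j$ by means of the adjoint formula collapses the inequality to
$$\sum_{i,j} \overline{c_i}\, c_j\, k(y_i, y_j)\, \bigl\langle (I_{\C^m} - \Phi(y_i) \Phi(y_j)^{*}) v_j,\, v_i \bigr\rangle \ge 0,$$
which is precisely the positive semidefiniteness of $(x,y) \mapsto (I_{\C^m} - \Phi(x)\Phi(y)^{*}) k(x,y)$ as a $\bM_m(\C)$-valued kernel.

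For the converse I would run the computation backwards. On the dense linear span $D$ of elementary tensors $k(\cdot, y) \otimes v$, define $T\bigl(k(\cdot, y) \otimes v\bigr) := k(\cdot, y) \otimes \Phi(y)^{*} v$ and extend by linearity. The hypothesized positivity of $(I_{\C^m} - \Phi(x)\Phi(y)^{*}) k(x,y)$ is exactly the inequality $\|T\xi\|^2 \le \|\xi\|^2$ for every $\xi \in D$; in particular $T$ is well defined (it annihilates every vector on which the norm degenerates) and extends to a contraction on $H_k \otimes \C^m$. Computing $T^{*}(f \otimes v)$ by pairing against $k(\cdot, y) \otimes e_j$ and invoking the reproducing property yields $T^{*}(f \otimes v)(y) = f(y)\, \Phi(y)\, v$, so $T^{*}$ is exactly multiplication by $\Phi$; hence $\Phi \in \bM_m(\C) \otimes \M$ with norm at most $1$.

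The main obstacle, such as it is, lies in the converse: one has to verify that $T$ is well defined on $D$ and simultaneously a contraction, and then justify the identification $T^{*} = M_\Phi$. Both points are handled by the same norm inequality that the kernel positivity directly supplies, followed by a short reproducing-kernel bookkeeping argument for the identification of $T^{*}$.
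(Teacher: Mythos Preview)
The paper does not supply its own proof of this lemma; it is quoted verbatim as \cite[Theorem~3.1(1)$\Leftrightarrow$(2)]{BTV} and used as a black box. So there is nothing in the paper to compare your argument against.

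That said, your proof is correct and is precisely the standard argument behind the cited result: the adjoint formula $M_\Phi^{*}(k(\cdot,y)\otimes v)=k(\cdot,y)\otimes\Phi(y)^{*}v$ turns the operator inequality $I-M_\Phi M_\Phi^{*}\ge 0$ into the kernel condition when tested on the dense span of elementary kernel tensors, and for the converse the same inequality lets you define a contraction $T$ on that span whose adjoint is identified with $M_\Phi$ via the reproducing property. The only point worth making explicit is that in the converse, boundedness of $T^{*}$ guarantees \emph{a priori} that $T^{*}(f\otimes v)\in H_k\otimes\C^m$, so the pointwise computation $T^{*}(f\otimes v)(y)=f(y)\Phi(y)v$ genuinely exhibits $\Phi$ as a matrix multiplier; you allude to this but it is the step that actually earns the conclusion $\Phi\in\bM_m(\C)\otimes\M$.
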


 The multiplier algebra of $H_k$, denoted by $\M$, is the algebra consisting of multipliers of $H_k$. It is naturally identified with a subalgebra of $B(H_k)$. 

The following normalization assumption is useful when one studies multiplier algebras: An RKHS $H_k$ is normalized at $x_0\in X$ if $k(x, x_0)=1$ for all $x\in X$. 
Any non-vanishing kernel can be normalized by considering
 \begin{equation*}
    k_{x_0}(x,y):=\frac{k(x,y)k(x_0,x_0)}{k(x,x_0)k(x_0,y)}
 \end{equation*}
 for any fixed $x_0\in X$. This normalization does not change the structure of the multiplier algebra (see \cite[Section 2.6]{AMp}). Therefore, we assume that every irreducible RKHS that we consider below is normalized at some point.


 We recall the definition of complete Nevanlinna--Pick spaces. 

 \begin{definition}\label{def:CNP}
An RKHS $H_k$ on a set $X$ is said to be a {\bf complete Nevanlinna--Pick space} if whenever $m, n\in \mathbb{N}$, $x_1\ldots, x_n$ are points in $X$ and $W_1,\ldots, W_n$ are $m\times m$ matrices such that
\begin{equation*}
    [(I-W_iW_j^*)k(x_i, x_j)]_{i,j=1}^n
\end{equation*}
is positive semidefinite, then there exists a $\Phi\in\mathbb{M}_m(\C)\otimes\M$ with $\Phi(x_i)=W_i$ for $1\le i\le n$ and $\|\Phi\|_{\mathbb{M}_m(\C)\otimes\M}\le 1$.
\end{definition}

Agler and McCarthy \cite{AM2000} established 
the universality of the Drury--Arveson space $H_{\E}^2$. Here, $\E$ is a Hilbert space and $H_{\E}^2$ is the RKHS on $\B_{\E}=\{\zeta\in \E\;|\;\|\zeta\|_{\E}<1\}$ whose reproducing kernel is $\displaystyle k_{\E}(\zeta,\eta)=\frac{1}{1-\langle \zeta,\eta\rangle_{\E}}$ $(\zeta,\eta\in \B_{\E})$.

\begin{theorem}[Agler--McCarthy $\mbox{\cite{AM2000}}$]\label{universality}
Let $H_k$ be an irreducible RKHS on a set $X$ normalized at $x_0\in X$. Then, $H_k$ is a complete Nevanlinna--Pick space if and only if there exist an auxiliary Hilbert space $\E$ and an injection $b:X\rightarrow\B_{\E}$ with $b(x_0)=0$ such that
\begin{equation*}
    k(x,y)=\frac{1}{1-\langle b(x), b(y)\rangle_{\E}}\;\;\;\;(x,y\in X).
\end{equation*}
\end{theorem}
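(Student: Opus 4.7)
The plan is to prove the two implications of the Agler--McCarthy theorem separately. Following the standard strategy for universality theorems of this kind, the sufficiency admits a lurking-isometry / transfer-function construction, while the necessity requires extracting the auxiliary Hilbert space $\E$ from the abstract complete-Pick interpolation property; the latter is the technical heart.

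For the sufficiency direction, assume $k(x,y) = (1 - \langle b(x), b(y)\rangle_{\E})^{-1}$ with $b(x_0)=0$. Given admissible Pick data $(x_i, W_i)_{i=1}^n$ with $[(I_m - W_iW_j^*)k(x_i,x_j)] \ge 0$, I would take a Kolmogorov decomposition $(I_m - W_iW_j^*)k(x_i,x_j) = U_i^* U_j$ for operators $U_i:\C^m \to F$ into an auxiliary Hilbert space, and substitute the identity $(1-\langle b(x_i), b(x_j)\rangle_{\E})k(x_i,x_j) = 1$ to obtain the Gramian equality
\[
I_m + \langle b(x_i), b(x_j)\rangle_{\E}\, U_i^* U_j \;=\; W_iW_j^* + U_i^* U_j.
\]
This equates inner products of two vector families in suitable direct-sum Hilbert spaces. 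The induced partial isometry extends (after enlarging the ambient spaces) to a unitary colligation $V = \begin{pmatrix}\alpha & \beta\\ \gamma & \delta\end{pmatrix}$, and its transfer function $\zeta \mapsto \alpha^* + \gamma^*(I - B_{\zeta}^*\delta^*)^{-1} B_{\zeta}^*\beta^*$ (with $B_\zeta$ denoting multiplication by $\zeta \in \B_{\E}$) precomposed with $b$ produces a matricial multiplier $\Phi$ on $X$ with $\Phi(x_i) = W_i$. Contractivity is then obtained by Lemma \ref{lemma:1-1} from the standard realization identity $(I_m - \Phi(x)\Phi(y)^*)k(x,y) \ge 0$, which is a direct consequence of $V$ being unitary.

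For the necessity direction, assume $H_k$ is a complete Nevanlinna--Pick space. The goal is to show that the kernel $K(x,y) := 1 - 1/k(x,y)$ is positive semidefinite on $X$; its Aronszajn--Kolmogorov factorization then provides a Hilbert space $\E$ and a map $b:X\to\E$ with $\langle b(x), b(y)\rangle_{\E} = K(x,y)$ and hence the desired formula. Normalization $k(x_0,x_0)=1$ gives $K(x_0,x_0)=0$, forcing $b(x_0)=0$; Cauchy--Schwarz applied to $k(\cdot,x_0)$ and $k(\cdot,x)$ yields $k(x,x)\ge 1$ with equality only when $k(\cdot,x_0)$ and $k(\cdot,x)$ are proportional, which by irreducibility happens only at $x=x_0$, so $\|b(x)\|_{\E}^2 = 1 - 1/k(x,x) < 1$, placing $b(X)\subseteq \B_{\E}$; injectivity of $b$ follows similarly from irreducibility. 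To prove PSD-ness of $K$ it suffices to treat an arbitrary finite subset $\{x_0, x_1, \ldots, x_n\}$. Following the Agler--McCarthy strategy, I would apply the complete Pick property to carefully designed matricial Pick data of size $n$ on this subset whose admissibility, after a Schur-complement argument exploiting the normalization $k(\cdot,x_0)=1$, is equivalent to the positivity of $K$ on the finite set; the Kolmogorov decomposition of the resulting interpolating Pick matrix then reads off a factorization of $K$ on the finite set as a Gramian of vectors in a Hilbert space of dimension at most $n$. PSD-ness of $K$ on every finite subset of $X$ implies PSD-ness globally, and the global $\E$ is produced canonically as the reproducing kernel Hilbert space of $K$ with $b(x) := K(\cdot, x)$.

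The main obstacle is the necessity direction: designing the matricial Pick data whose admissibility precisely encodes positivity of $K$ on a finite subset, and extracting the factorization from the interpolant, is the genuinely nontrivial step and is the reason that the full matricial (rather than merely scalar) Pick property is needed. Once the finite-set PSD-ness of $K$ is in hand, the passage to the global auxiliary Hilbert space $\E$ and the map $b$ is purely formal.
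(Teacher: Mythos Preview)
The paper does not prove Theorem~\ref{universality}: it is stated with attribution to Agler and McCarthy \cite{AM2000} and used as a black box throughout. There is therefore no ``paper's own proof'' to compare against; the result is imported from the literature.

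As for your proposal on its own merits: the sufficiency direction via a lurking-isometry colligation is the standard route (essentially the Ball--Trent--Vinnikov realization argument, cf.\ \cite{BTV}) and your outline is correct. The necessity direction, however, is not a proof but an announcement of a strategy. You correctly identify that the crux is showing $1-1/k$ is positive semidefinite and that this must somehow be extracted from the \emph{matricial} Pick property, but you do not actually specify the Pick data or carry out the Schur-complement reduction. In the original Agler--McCarthy argument (and in the McCullough/Quiggin antecedents) this step is done concretely: one fixes a finite set $\{x_0,x_1,\dots,x_n\}$, takes $m=n$, and chooses the target matrices $W_i$ so that the Pick matrix $[(I-W_iW_j^*)k(x_i,x_j)]$ being positive semidefinite is equivalent, after row/column operations exploiting the normalization at $x_0$, to $[1-1/k(x_i,x_j)]_{i,j\ge 1}\ge 0$. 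Without writing down those $W_i$ and checking the equivalence, you have a plan rather than a proof. Your closing paragraph openly concedes this, so the gap is self-diagnosed; just be aware that filling it is the entire content of the theorem.
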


\begin{remark}\label{remarkHartz}
    \upshape In this case, Hartz \cite[Theorem 2.1]{Har2015} proved that the composition operator $C_b:H_{\E}^2\ominus I(A)\rightarrow H_k$ defined by $C_bf:=(f|_A)\circ b$ is a unitary operator, where $A:=b(X)$ and $I(A):=\{f\in H_{\E}^2\;|\;f(z)=0\;(z\in A)\}$.
\end{remark}

In particular, $H^2$ is a complete Nevanlinna--Pick space. Since $H^{\infty}(\D)$ is completely isometrically equal to $\mathrm{Mult}(H^2)$ (see e.g., \cite[Example 4]{CT}), the next result immediately follows from \cite[Theorem 3.1(1)$\Leftrightarrow$(3) and its proof]{BTV}.

\begin{lemma}\label{lemma:1-5}
Let $X$ be a set and $x_0\in X$. Let $j$ be an injection from $X$ into $\D$ such that $j(x_0)=0$. We denote an irreducible complete Nevanlinna--Pick space on $X$ given by reproducing kernel
\begin{equation*}
k(x,y)=\frac{1}{1-j(x)\overline{j(y)}}\;\;\;\;(x,y\in X).
\end{equation*}
by $H_k$.
Then, for any $m\in\mathbb{N}$ and $\Phi\in\bM_m(\C)\otimes \M$ there exists a $\Psi\in \bM_m(\C)\otimes H^{\infty}(\D)$ such that $\Phi=\Psi\circ j$ and $\|\Phi\|_{\bM_m(\C)\otimes\M}=\|\Psi\|_{\infty}$.
\end{lemma}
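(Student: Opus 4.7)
The plan is to exploit that $k(x,y)=1/(1-j(x)\overline{j(y)})$ is the pull-back of the Szeg\H{o} kernel via $j$. By homogeneity assume $\|\Phi\|_{\bM_m(\C)\otimes\M}=1$. It suffices to produce some $\Psi\in\bM_m(\C)\otimes H^\infty(\D)$ with $\|\Psi\|_\infty\le 1$ and $\Psi\circ j=\Phi$: the reverse bound $\|\Phi\|_{\bM_m(\C)\otimes\M}\le\|\Psi\|_\infty$ is automatic, since if $\|\Psi\|_\infty\le 1$ then $(I_{\C^m}-\Psi(z)\Psi(w)^*)/(1-z\bar w)\succeq 0$ on $\D\times\D$, and restricting to $A\times A$ (with $A:=j(X)$) yields $(I_{\C^m}-\Phi(x)\Phi(y)^*)k(x,y)\succeq 0$ on $X\times X$, whence $\|\Phi\|_{\bM_m(\C)\otimes\M}\le 1$ by Lemma \ref{lemma:1-1}.

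Applying Lemma \ref{lemma:1-1} in the forward direction to $H_k$ and $\Phi$ gives
\[
\frac{I_{\C^m}-\Phi(x)\Phi(y)^*}{1-j(x)\overline{j(y)}}\succeq 0\quad(x,y\in X).
\]
Since $j$ is injective, the recipe $\widetilde\Phi(j(x)):=\Phi(x)$ well-defines a function $\widetilde\Phi\colon A\to\bM_m(\C)$, and the previous display is exactly the assertion that the kernel $(I_{\C^m}-\widetilde\Phi(a)\widetilde\Phi(b)^*)/(1-a\bar b)$ on $A\times A$ is positive semidefinite. The task thus reduces to extending $\widetilde\Phi$ to a contractive matrix-valued $H^\infty$ function on all of $\D$.

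For the extension, invoke the CNP property of $H^2$: for every finite subset $F\subset A$, Definition \ref{def:CNP} (together with the completely isometric identification $H^\infty(\D)=\mathrm{Mult}(H^2)$) produces a $\Psi_F\in\bM_m(\C)\otimes H^\infty(\D)$ with $\|\Psi_F\|_\infty\le 1$ and $\Psi_F|_F=\widetilde\Phi|_F$. Order the finite subsets of $A$ by inclusion and pass to a weak-$*$ cluster point $\Psi$ of the net $(\Psi_F)_F$ in the closed unit ball of $\bM_m(\C)\otimes H^\infty(\D)$, which is weak-$*$ compact by Banach--Alaoglu. Since point evaluation at any $z\in\D$ is weak-$*$ continuous, one gets $\|\Psi\|_\infty\le 1$ and $\Psi|_A=\widetilde\Phi$, i.e., $\Psi\circ j=\Phi$, completing the proof. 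The anticipated main obstacle is the passage from the family of finite Pick interpolants $\Psi_F$ to a single global $H^\infty$ extension with the same norm bound; the weak-$*$ compactness of the matricial unit ball does the work here, but one may instead read $\Psi$ off directly from the transfer-function realization in the proof of \cite[Theorem 3.1(1)$\Leftrightarrow$(3)]{BTV}, which is the route the paper alludes to.
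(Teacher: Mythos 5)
Your proof is correct, and it reaches the conclusion by a genuinely different mechanism than the paper. The paper disposes of this lemma by citing \cite[Theorem 3.1(1)$\Leftrightarrow$(3) and its proof]{BTV}: there the positivity of the kernel $(I_{\C^m}-\widetilde\Phi(a)\widetilde\Phi(b)^*)/(1-a\overline{b})$ on $A\times A$ is fed into a lurking-isometry argument producing a unitary colligation, and the associated transfer-function realization $\Psi(z)=D+zC(I-zA)^{-1}B$ is automatically defined and contractive on all of $\D$ and restricts to $\widetilde\Phi$ on $A$ --- this is exactly the alternative you mention in your last sentence. You instead bypass the realization machinery: after the same reduction via Lemma \ref{lemma:1-1} (including the easy restriction inequality $\|\Phi\|_{\bM_m(\C)\otimes\M}\le\|\Psi\|_{\infty}$, which the paper leaves implicit), you invoke the finite-point complete Pick property of $H^2$ from Definition \ref{def:CNP} to get interpolants $\Psi_F$ on finite subsets $F\subset A$, and then glue them by a weak-$*$ cluster point of the net $(\Psi_F)_F$ in the unit ball of $\bM_m(\C)\otimes H^{\infty}(\D)$, using that point evaluations at points of $\D$ are weak-$*$ continuous (pairing against Poisson kernels in $L^1(\T)$). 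Both routes are sound; yours is more self-contained relative to the definitions actually stated in the paper (it uses only that $H^2$ is a complete Nevanlinna--Pick space plus Banach--Alaoglu), while the paper's citation is shorter and hands over the extension in one step for an arbitrary subset without any limiting argument. One small point worth making explicit in your write-up: the identification $\|\Psi_F\|_{\bM_m(\C)\otimes\mathrm{Mult}(H^2)}=\|\Psi_F\|_{\infty}$ used to read Definition \ref{def:CNP} as producing sup-norm contractions is the complete isometry $H^{\infty}(\D)=\mathrm{Mult}(H^2)$ that the paper quotes from \cite[Example 4]{CT}; you do gesture at it, and it is needed again when you close the loop to conclude $\|\Psi\|_{\infty}=\|\Phi\|_{\bM_m(\C)\otimes\M}$.
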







\section{Gelfand transform of multiplier algebra}

To prove our main result, we use the uniqueness of a solution of a certain extremal problem for two-point Nevanlinna--Pick spaces.

\begin{definition}\label{def:two-point Pick}
We say that an RKHS $H_k$ on a set $X$ satisfies the  {\bf two-point Nevanlinna--Pick property} if whenever $x_1, x_2$ are points in $X$ and $w_1,w_2$ are complex numbers such that
\begin{equation*}
\begin{bmatrix}
    (1-|w_1|^2)k(x_1, x_1)&(1-w_1\overline{w_2})k(x_1, x_2)\\
    (1-w_2\overline{w_1})k(x_2, x_1)&(1-|w_2|^2)k(x_2, x_2)
\end{bmatrix}
\end{equation*}
is positive semidefinite, then there exists a $\phi\in\M$ with $\phi(x_i)=w_i$ for $i=1,2$ and $\|\phi\|_{\M}\le 1$. This matrix is nothing but the $m=1$, $n=2$ case of the matrix in Definition \ref{def:CNP}.
\end{definition}

\begin{lemma}[\mbox{\cite[Proposition 3.1]{Har2017}}]\label{lemma2}
    Let $H_k$ be an irreducible RKHS on a set $X$ normalized at $x_0\in X$. Suppose $H_k$ satisfies the two-point Nevanlinna--Pick property. Then
    \begin{equation*} 
        \sup\{\mathrm{Re}\phi(x)\;|\;\|\phi\|_{\M}\le 1 \;\mathrm{and}\; \phi(x_0)=0\}
        =\left(1-\frac{1}{k(x,x)}\right)^{1/2}
    \end{equation*}
    for any $x\in X$, and this number is strictly positive if $x\ne x_0$. Moreover, there is a unique multiplier $\phi_{x}$ that achieves the supremum if $x\ne x_0$.
\end{lemma}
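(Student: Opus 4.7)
The plan is to reduce everything to a direct analysis of the two-point Pick matrix at $(x_0, x)$. Using the normalization $k(x_0, x_0) = k(x, x_0) = 1$, the $2 \times 2$ matrix in Definition \ref{def:two-point Pick} with values $(w_1, w_2) = (0, w)$ collapses to $\bigl(\begin{smallmatrix} 1 & 1 \\ 1 & (1-|w|^2)k(x,x) \end{smallmatrix}\bigr)$, which is positive semidefinite if and only if $|w|^2 \le 1 - 1/k(x,x)$. By the two-point Nevanlinna--Pick property, this is precisely the set of values $w$ attainable as $\phi(x)$ by some $\phi \in \M$ with $\phi(x_0) = 0$ and $\|\phi\|_{\M} \le 1$. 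Hence $\mathrm{Re}\,\phi(x) \le |\phi(x)| \le (1 - 1/k(x,x))^{1/2}$, with equality achieved at $w = (1 - 1/k(x,x))^{1/2}$. The strict positivity when $x \ne x_0$ is immediate from irreducibility: since $k(\cdot, x_0)$ and $k(\cdot, x)$ are linearly independent, Cauchy--Schwarz is strict and gives $1 = k(x, x_0) < \sqrt{k(x,x)}$, so $k(x, x) > 1$.

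For uniqueness, suppose $\phi$ achieves the supremum, so $\phi(x_0) = 0$, $\phi(x) = w := (1 - 1/k(x,x))^{1/2}$, and $\|\phi\|_{\M} \le 1$. Fix an arbitrary $y \in X$; by Lemma \ref{lemma:1-1} applied to $\phi$ (with $m = 1$), the $3 \times 3$ Pick matrix at $(x_0, x, y)$ with values $(0, w, \phi(y))$ is positive semidefinite. Its upper-left $2 \times 2$ principal block is the all-ones matrix, which has $(1, -1)$ in its kernel. By the standard fact that a null vector of a principal submatrix of a positive semidefinite matrix lifts (by padding with zeros) to a null vector of the whole matrix, the vector $(1, -1, 0)^T$ lies in the kernel of the $3 \times 3$ matrix. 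Reading off the resulting third-component equation gives $1 = (1 - \phi(y)\overline{w})\, k(y, x)$, i.e.,
\begin{equation*}
\phi(y) = \frac{1}{\overline{w}}\Bigl(1 - \frac{1}{k(y, x)}\Bigr),
\end{equation*}
so $\phi(y)$ is uniquely determined for every $y \in X$.

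The main (mild) obstacle is the kernel-extension step in the uniqueness argument; apart from that, the proof consists only of direct manipulation of $2\times 2$ and $3 \times 3$ Pick matrices together with a Cauchy--Schwarz application of irreducibility. One can sanity-check the output formula by noting that it gives $\phi(x_0) = 0$ and $\phi(x) = w$, and in the Hardy-space case $x_0 = 0, x = \lambda$ it recovers the classical Schwarz-lemma extremal $\phi(z) = (\bar\lambda/|\lambda|)\,z$.
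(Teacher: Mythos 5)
The paper does not prove this lemma; it imports it verbatim from the cited reference \cite[Proposition 3.1]{Har2017}, so there is no in-paper proof to compare against. Your argument is correct and is essentially the standard one (and the one in the cited source): the two-point Pick matrix at $(x_0,x)$ with values $(0,w)$ pins down the attainable set $|w|^2\le 1-1/k(x,x)$, strict positivity follows from strict Cauchy--Schwarz via irreducibility, and uniqueness follows because the extremal value makes the $(x_0,x)$ block of the three-point Pick matrix singular, so the padded null vector $(1,-1,0)^T$ forces $(1-\phi(y)\overline{w})k(y,x)=1$ and hence determines $\phi(y)$ for every $y$; all steps, including the kernel-lifting fact for principal submatrices of positive semidefinite matrices and the observation that $\mathrm{Re}\,\phi(x)=|\phi(x)|=w$ at the extremum, are sound.
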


We need another lemma to prove Theorem \ref{theorem:1-1}. Let $\{e_1, e_2\}$ be an orthonormal system in a Hilbert space $\E$. We denote the norm closure of $\C[\langle \zeta, e_1\rangle, \langle \zeta, e_2\rangle]$ in $\mathrm{Mult}(H_{\E}^2)$ by $\A(e_1, e_2)$. 
Here, in the proof of \cite[Theorem 3.9]{Koj}, we essentially proved that $\langle \zeta,e\rangle_{\E}\;(\zeta\in\B_{\E})$ is a contractive multiplier of $H_{\E}^2$ for every unit vector $e\in \E$.
Moreover, we denote the norm closure of $\C[z_1, z_2]$ in $\mathrm{Mult}(H_{\C^2}^2)$ by $\A_2$.
The following lemma tells us that we can identify $\A(e_1, e_2)$ with $\A_2$:

\begin{lemma}\label{lemma:2-2}
Let $\E$ be a Hilbert space with $\dim{\E}\ge 2$. Let $\{e_1, e_2\}$ be an orthonormal system in $\E$. Then, a unital homomorphism
\begin{equation*}
    \C[\langle \zeta, e_1\rangle, \langle \zeta, e_2\rangle]\rightarrow \C[z_1, z_2],\;\;\;\;p(\langle \zeta, e_1\rangle, \langle \zeta, e_2\rangle)\mapsto p(z_1, z_2)
\end{equation*}
can extend to an isometric isomorphism from $\A(e_1, e_2)$
onto $\A_2$.
\end{lemma}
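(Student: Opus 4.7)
The plan is to introduce the isometric embedding $\iota: \B_{\C^2} \rightarrow \B_\E$, $\iota(z_1, z_2) = z_1 e_1 + z_2 e_2$, and the contractive projection $\pi: \B_\E \rightarrow \B_{\C^2}$, $\pi(\zeta) = (\langle \zeta, e_1 \rangle_\E, \langle \zeta, e_2 \rangle_\E)$, which satisfy $\pi \circ \iota = \mathrm{id}$. For each $p \in \C[z_1, z_2]$ define $\Phi_p := p \circ \pi \in \C[\langle \zeta, e_1 \rangle, \langle \zeta, e_2 \rangle]$; the homomorphism in the statement is then $\Phi_p \mapsto p$, so it suffices to show that $p \mapsto \Phi_p$ is isometric between the multiplier norms of $H_{\C^2}^2$ and $H_\E^2$ on the polynomial algebras, and then pass to the norm closures.

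For the inequality $\|p\|_{\mathrm{Mult}(H_{\C^2}^2)} \le \|\Phi_p\|_{\mathrm{Mult}(H_\E^2)}$, I would note that if $\Phi \in \mathrm{Mult}(H_\E^2)$ has norm at most $1$, then Lemma \ref{lemma:1-1} gives positive semidefiniteness of $(1 - \Phi(\zeta)\overline{\Phi(\eta)})\, k_\E(\zeta, \eta)$ on $\B_\E$; restricting to $\iota(\B_{\C^2})$ and using $k_\E(\iota(z), \iota(w)) = k_{\C^2}(z, w)$, Lemma \ref{lemma:1-1} in the opposite direction shows that $\Phi \circ \iota \in \mathrm{Mult}(H_{\C^2}^2)$ also has norm at most $1$. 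Applied to $\Phi = \Phi_p$, for which $\Phi_p \circ \iota = p$, this yields the claimed inequality.

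For the reverse inequality, assume $\|p\|_{\mathrm{Mult}(H_{\C^2}^2)} \le 1$. By Lemma \ref{lemma:1-1}, $(1 - p(z)\overline{p(w)})\, k_{\C^2}(z, w)$ is positive semidefinite on $\B_{\C^2}$, and pulling back by $\pi$ shows that $(1 - \Phi_p(\zeta)\overline{\Phi_p(\eta)})\, k_{\C^2}(\pi(\zeta), \pi(\eta))$ is positive semidefinite on $\B_\E$. By the Schur product theorem it then suffices to verify that the quotient
\begin{equation*}
    L(\zeta, \eta) := \frac{k_\E(\zeta, \eta)}{k_{\C^2}(\pi(\zeta), \pi(\eta))}
\end{equation*}
is a positive semidefinite kernel on $\B_\E$. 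Letting $P$ denote orthogonal projection onto $\{e_1, e_2\}^{\perp} \subseteq \E$, a direct algebraic manipulation gives
\begin{equation*}
    L(\zeta, \eta) = \frac{1 - \langle \pi(\zeta), \pi(\eta) \rangle_{\C^2}}{1 - \langle \zeta, \eta \rangle_\E} = 1 + \langle P\zeta, P\eta \rangle_\E \cdot k_\E(\zeta, \eta).
\end{equation*}
Since $\langle P\zeta, P\eta \rangle_\E$ is a genuine inner-product kernel and $k_\E$ is positive semidefinite, the Schur product theorem and closure under addition imply that $L$ is positive semidefinite; Lemma \ref{lemma:1-1} then yields $\|\Phi_p\|_{\mathrm{Mult}(H_\E^2)} \le 1$.

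Finally, since $\langle \zeta, e_1 \rangle$ and $\langle \zeta, e_2 \rangle$ are algebraically independent on $\B_\E$ (evaluate on $\iota(\B_{\C^2})$), the map $p \mapsto \Phi_p$ is a well-defined injective unital algebra homomorphism, inverse to the homomorphism in the statement, and the isometry established above extends by continuity to the desired isometric isomorphism $\A_2 \to \A(e_1, e_2)$. The principal obstacle is producing a positive semidefinite factorization of $k_\E$ through $k_{\C^2} \circ (\pi, \pi)$; the explicit identity displayed above combined with the Schur product theorem is what resolves it.
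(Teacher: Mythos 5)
Your proof is correct and follows essentially the same route as the paper: the first inequality by restricting the positive semidefinite kernel to the slice $z_1e_1+z_2e_2$, and the reverse by factoring $k_\E$ through $k_{\C^2}\circ(\pi,\pi)$ and checking that the quotient kernel is positive semidefinite via the Schur product theorem. Your identity $L(\zeta,\eta)=1+\langle P\zeta,P\eta\rangle_\E\,k_\E(\zeta,\eta)$ is exactly the paper's equation (3.3), which the paper obtains by Parseval's identity over an orthonormal basis containing $e_1,e_2$ rather than via the orthogonal projection $P$ onto $\{e_1,e_2\}^\perp$ --- a purely cosmetic difference.
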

\begin{proof}
Let $\displaystyle p(z_1, z_2)=\sum_{m_1, m_2}a_{m_1, m_2}z_1^{m_1}z_2^{m_1}$ be a polynomial of indeterminates $z_1$ and $z_2$.
We note that $p(\langle \zeta, e_1\rangle, \langle \zeta, e_2\rangle)$ is a multiplier of $H_{\E}^2$. 
If 
\begin{equation*}
\|p(\langle\zeta, e_1\rangle, \langle \zeta, e_2\rangle)\|_{\mathrm{Mult}(H_{\E}^2)}=1,
\end{equation*}
then Lemma \ref{lemma:1-1} implies that
\begin{equation*}
    \frac{1-p(\langle\zeta, e_1\rangle, \langle \zeta, e_2\rangle)\overline{p(\langle\eta, e_1\rangle, \langle \eta, e_2\rangle)}}{1-\langle \zeta,\eta\rangle}\;\;\;\;(\zeta,\eta\in\B_{\E})
\end{equation*}
is positive semidefinite. 
For arbitrary $(z_1, z_2), (w_1, w_2)\in\B_{\C^2}$, we set $\zeta=z_1e_1+z_2e_2, \eta=w_1e_1+w_2e_2\in\B_{\E}$. Since $\{e_1, e_2\}$ is an orthonormal system,
we have
\begin{align*}
&\frac{1-p(z_1, z_2)\overline{p(w_1, w_2)}}{1-\langle (z_1, z_2), (w_1, w_2)\rangle_{\C^2}}\\
&=\frac{1-\left(\displaystyle\sum_{m_1, m_2}a_{m_1, m_2}z_1^{m_1}z_2^{m_2}\right)\overline{\left(\displaystyle\sum_{m_1, m_2}a_{m_1, m_2}w_1^{m_1}w_2^{m_2}\right)}}{1-(z_1\overline{w_1}+z_2\overline{w_2})}\\
&=\frac{1-\left(\displaystyle\sum_{m_1, m_2}a_{m_1, m_2}\langle \zeta, e_1\rangle^{m_1}\langle \zeta, e_2\rangle^{m_2}\right)\overline{\left(\displaystyle\sum_{m_1, m_2}a_{m_1, m_2}\langle \eta, e_1\rangle^{m_1}\langle \eta, e_2\rangle^{m_2}\right)}}{1-\langle \zeta, \eta\rangle_{\E}}\\
&=\frac{1-p(\langle\zeta, e_1\rangle, \langle \zeta, e_2\rangle)\overline{p(\langle \eta, e_1\rangle, \langle \eta, e_2\rangle)}}{1-\langle \zeta, \eta\rangle_{\E}}.
\end{align*}
Thus, 
\begin{equation*}
    \frac{1-p(z_1, z_2)\overline{p(w_1, w_2)}}{1-\langle (z_1, z_2), (w_1, w_2)\rangle_{\C^2}}\;\;\;\;((z_1, z_2), (w_1, w_2)\in\B_{\C^2})
\end{equation*}
is positive semidefinite, and hence Lemma \ref{lemma:1-1} tells us that $p(z_1, z_2)$ is a contractive multiplier of $H_{\C^2}^2$. Therefore, we have 
\begin{equation*}
\|p(z_1, z_2)\|_{\mathrm{Mult}(H_{\C^2}^2)}\le\|p(\langle \zeta, e_1\rangle, \langle \zeta, e_2\rangle)\|_{\mathrm{Mult}(H_{\E}^2)} 
\end{equation*}
for all $p\in\C[z_1, z_2]$.

We prove the opposite inequality. Let $p\in\C[z_1, z_2]$ be a polynomial with $\|p(z_1, z_2)\|_{\mathrm{Mult}(H_{\C^2}^2)}=1$.
For all $\zeta, \eta\in \B_{\E}$, we have
\begin{align}\label{equation:2-1}
&\frac{1-p(\langle\zeta, e_1\rangle, \langle \zeta, e_2\rangle)\overline{p(\langle \eta, e_1\rangle, \langle \eta, e_2\rangle)}}{1-\langle \zeta, \eta\rangle} \notag \\
&=\frac{1-p(\langle\zeta, e_1\rangle, \langle \zeta, e_2\rangle)\overline{p(\langle \eta, e_1\rangle, \langle \eta, e_2\rangle)}}{1-\langle \zeta, e_1\rangle\langle e_1, \eta\rangle-\langle \zeta, e_2\rangle\langle e_2, \eta\rangle}\times
\frac{1-\langle \zeta, e_1\rangle\langle e_1, \eta\rangle-\langle \zeta, e_2\rangle\langle e_2, \eta\rangle}{1-\langle \zeta, \eta\rangle}.
\end{align}
Let $\{e_i\}_{i\in I}$ be an orthonormal basis for $\E$ containing $e_1$ and $e_2$. As an application of Parseval's identity (see e.g., \cite[Theorem 4.18]{Rud}), we get
\begin{align}\label{equation:3-3}
\frac{1-\langle \zeta, e_1\rangle\langle e_1, \eta\rangle-\langle \zeta, e_2\rangle\langle e_2, \eta\rangle}{1-\langle\zeta, \eta\rangle}
&=\frac{1-\displaystyle\sum_{i\in I}\langle \zeta, e_i\rangle \langle e_i, \eta\rangle+\sum_{i\in I\setminus\{1,2\}}\langle \zeta, e_i\rangle\langle e_i, \eta\rangle}{1-\langle \zeta, \eta\rangle} \notag \\
&=1+\frac{\displaystyle\sum_{i\in I\setminus\{1,2\}}\langle\zeta, e_i\rangle \langle e_i, \eta\rangle}{1-\langle \zeta, \eta\rangle}.
\end{align}
Thus, the Schur product theorem shows that
\begin{equation*}
\frac{1-\langle \zeta, e_1\rangle\langle e_1, \eta\rangle-\langle \zeta, e_2\rangle\langle e_2, \eta\rangle}{1-\langle\zeta, \eta\rangle}
\end{equation*}
is positive semidefinite. Since $\|p(z_1, z_2)\|_{\mathrm{Mult}(H_{\C^2}^2)}=1$, Lemma \ref{lemma:1-1} implies that 
\begin{equation*}
\frac{1-p(\langle\zeta, e_1\rangle, \langle \zeta, e_2\rangle)\overline{p(\langle \eta, e_1\rangle, \langle \eta, e_2\rangle)}}{1-\langle \zeta, e_1\rangle\langle e_1, \eta\rangle-\langle \zeta, e_2\rangle\langle e_2, \eta\rangle}\;\;\;\;(\zeta, \eta\in\B_{\E})
\end{equation*}
is positive semidefinite, and hence the Schur product theorem and equation (\ref{equation:2-1}) tells us that
\begin{equation*}
    \frac{1-p(\langle\zeta, e_1\rangle, \langle \zeta, e_2\rangle)\overline{p(\langle \eta, e_1\rangle, \langle \eta, e_2\rangle)}}{1-\langle \zeta, \eta\rangle}\;\;\;\;(\zeta, \eta\in \B_{\E})
\end{equation*}
is positive semidefinite. By Lemma \ref{lemma:1-1} again, $\|p(\langle \zeta, e_1\rangle, \langle \zeta, e_2\rangle)\|_{\mathrm{Mult}(H_{\E}^2)}\le 1$.

In conclusion, 
\begin{equation*}
\|p(z_1, z_2)\|_{\mathrm{Mult}(H_{\C^2}^2)}=\|p(\langle \zeta, e_1\rangle, \langle\zeta, e_2\rangle)\|_{\mathrm{Mult}(H_{\E}^2)}
\end{equation*}
holds for every $p\in\C[z_1, z_2]$. Therefore, the unital homomorphism
\begin{equation*}
    \C[\langle \zeta, e_1\rangle, \langle \zeta, e_2\rangle]\rightarrow \C[z_1, z_2],\;\;\;\;p(\langle \zeta, e_1\rangle, \langle \zeta, e_2\rangle)\mapsto p(z_1, z_2)
\end{equation*}
can extend to an isometric isomorphism from $\A(e_1, e_2)$ onto $\A_2$.
\end{proof}

A subset $A\subset\D$ is said to be a {\bf set of uniqueness for $H^2$} if the only element of $H^2$ that vanishes on $A$ is the zero function.
Since the zeros of a nonzero function in $H^2$ satisfy the Blaschke condition \cite[Theorem 15.23]{Rud}, $A\subset\D$ is a set of uniqueness for $H^2$ if and only if
\begin{equation*}
\sum_{a\in A}(1-|a|)=\infty.
\end{equation*}


\begin{proof}[Proof of Theorem \ref{theorem:1-1}]
If $H_k$ is an irreducible complete Nevanlinna--Pick space normalized at $x_0$, then Theorem \ref{universality} shows that there exist a Hilbert space $\E$ and an injection $b:X\rightarrow\B_{\E}$ with $b(x_0)=0$ such that
\begin{equation*}
    k(x,y)=\frac{1}{1-\langle b(x), b(y)\rangle_{\E}}\;\;\;\;(x,y\in X).
\end{equation*}

Let $\{e_1, e_2\}$ be an orthonormal system in $\E$. 
We prove that 
\begin{equation}\label{equation:1-1-1}
\langle b(x), e_1\rangle_{\E}+\frac{1}{2}\langle b(x), e_2\rangle_{\E}^2\;\;\;\;(x\in X) 
\end{equation}
is a contractive multiplier of $H_k$. We know that this function is a multiplier of $H_k$ (see the proof of \cite[Theorem 3.9]{Koj}).
Since the Gelfand transform $g:\M\rightarrow C(\Delta (\M))$ is isometric, we have
\begin{align*}
&\left\|\langle b(\cdot), e_1\rangle+\frac{1}{2}\langle b(\cdot), e_2\rangle^2\right\|_{\M}\\
&=\left\|g(\langle b(\cdot), e_1\rangle+\frac{1}{2}\langle b(\cdot), e_2\rangle^2)\right\|_{\infty}\\
&=\sup\left\{|\chi(\langle b(\cdot), e_1\rangle+\frac{1}{2}\langle b(\cdot), e_2\rangle^2)|\;\middle|\;\chi\in\Delta(\M)\right\}.
\end{align*}
Since an operator 
$C_b:\mathrm{Mult}(H_{\E}^2)\rightarrow \M$ defined by $C_b\phi\mapsto\phi\circ b$
is a unital homomorphism, $\chi\circ C_b$ is a character of $\mathrm{Mult}(H_{\E}^2)$ for every $\chi\in\Delta(\M)$. 
Therefore, we obtain
\begin{align*}
&\sup\left\{|\chi(\langle b(\cdot), e_1\rangle+\frac{1}{2}\langle b(\cdot), e_2\rangle^2)|\;\middle|\;\chi\in\Delta(\M)\right\}\\
&\le\sup\left\{|\pi(\langle \zeta, e_1\rangle+\frac{1}{2}\langle \zeta, e_2\rangle^2)|\;\middle|\;\pi\in\Delta(\mathrm{Mult}(H_{\E}^2))\right\}.
\end{align*}
As $\displaystyle\langle \zeta, e_1\rangle+\frac{1}{2}\langle \zeta, e_2\rangle^2\in\A(e_1, e_2)$ and $\A(e_1, e_2)\subset \mathrm{Mult}(H_{\E}^2)$, we get
\begin{align*}
     &\sup\left\{|\pi(\langle \zeta, e_1\rangle+\frac{1}{2}\langle \zeta, e_2\rangle^2)|\;\middle|\;\pi\in\Delta(\mathrm{Mult}(H_{\E}^2))\right\}\\
    &\le \sup\left\{|\rho(\langle \zeta, e_1\rangle+\frac{1}{2}\langle \zeta, e_2\rangle^2)|\;\middle|\;\rho\in\Delta(\A(e_1, e_2))\right\}
\end{align*}
By Lemma \ref{lemma:2-2}, we obtain 
\begin{align*}
    \sup\left\{|\rho(\langle \zeta, e_1\rangle+\frac{1}{2}\langle \zeta, e_2\rangle^2)|\;\middle|\;\rho\in\Delta(\A(e_1, e_2))\right\}
    =\sup\{|\rho(z_1+\frac{1}{2}z_2^2)|\;|\;\rho\in\Delta(\A_2)\}.
\end{align*}
Since $\Delta(\A_2)$ is (homeomorphic to) $\overline{\B_2}$, we have
\begin{align*}
&\sup\{|\rho(z_1+\frac{1}{2}z_2^2)|\;|\;\rho\in\Delta(\A_2)\}\\
&=\sup_{(z_1, z_2)\in\overline{\B_2}}|z_1+\frac{1}{2}z_2^2|\\
&\le \sup_{(z_1, z_2)\in\overline{\B_2}}|z_1|+\frac{1}{2}|z_2|^2\\
&\le \sup_{z_1\in\overline{\D}}|z_1|+\frac{1}{2}(1-|z_1|^2)\;\;(\mbox{because $|z_1|^2+|z_2|^2\le 1$}).
\end{align*}
A tedious calculation enables us to prove that a function 
\begin{equation*}
g(t):=t+\frac{1}{2}(1-t^2)
\end{equation*}
is monotonically increasing on the interval $[0,1]$ with $g(1)=1$. Thus, we obtain
\begin{equation*}
\sup_{t\in [0,1]}g(t)=1.
\end{equation*}
Therefore, we have 
\begin{equation*}
\sup_{z_1\in\overline{\D}}|z_1|+\frac{1}{2}(1-|z_1|^2)\le 1,
\end{equation*}
and hence 
\begin{equation*}
\left\|\langle b(\cdot), e_1\rangle+\frac{1}{2}\langle b(\cdot), e_2\rangle^2\right\|_{\M}\le 1.
\end{equation*}

Next, we prove that there exists an injection $j:X\rightarrow \D$ such that $j(x_0)=0$ and
\begin{equation*}
    k(x,y)=\frac{1}{1-j(x)\overline{j(y)}}\;\;\;\;(x,y\in X)
\end{equation*}
holds. We use the same scheme as the proof of \cite[Theorem 3.9]{Koj}. 
We fix a point $\lambda\in X\setminus \{x_0\}$ and set $\displaystyle \bb:=\frac{b(\lambda)}{\|b(\lambda)\|}$ (n.b., $b(\lambda)\ne 0$ because $b$ is injective and $b(x_0)=0$). Then, $\langle b(\cdot), \bb\rangle$ is a contractive multiplier of $H_k$ such that $\langle b(x_0), \bb\rangle=0$ and 
\begin{equation*}
\langle b(\lambda),\bb\rangle=\|b(\lambda)\|=\left(1-\frac{1}{k(\lambda, \lambda)}\right)^{1/2}. 
\end{equation*}
Let $\{e_i\}_{i\in I}$ be an orthonormal basis for $\E$ containing $\bb$. For any $i\in I$ with $e_i\ne \bb$, we define a multiplier of $H_k$ by
\begin{equation*}
\langle b(\cdot),\bb\rangle+\frac{1}{2}\langle b(\cdot), e_i\rangle^2.
\end{equation*}
    Obviously, $\displaystyle\langle b(x_0), \bb\rangle+\frac{1}{2}\langle b(x_0),e_i\rangle^2=0$. Moreover, we have
\begin{equation*}
\langle b(\lambda),\bb\rangle+\frac{1}{2}\langle b(\lambda), e_i\rangle^2=\langle b(\lambda), \bb\rangle=\|b(\lambda)\|=\displaystyle\left(1-\frac{1}{k(\lambda,\lambda)}\right)^{1/2}
\end{equation*}
because $\bb$ and $e_i$ are orthogonal. We have already seen that
\begin{equation*}
\left\|\langle b(\cdot), \bb\rangle+\frac{1}{2}\langle b(\cdot), e_i\rangle^2\right\|_{\M}\le 1.
\end{equation*}
Therefore, the uniqueness part of Lemma \ref{lemma2} implies 
\begin{equation*}
\langle b(\cdot),\bb\rangle+\frac{1}{2}\langle b(\cdot), e_i\rangle^2=\langle b(\cdot),\bb\rangle
\end{equation*}
for every $i\in I$ with $e_i\ne \bb$. So, we conclude that $\langle b(x), e_i\rangle= 0$ for all $x\in X$ if $e_i\ne \bb$. Thus, we have
\begin{equation}\label{equation2}
    b(x)=\sum_{i\in I}\langle b(x), e_i\rangle e_i=\langle b(x), \bb\rangle \bb
\end{equation}
for all $x\in X$. We define $j:X\rightarrow \D$ by $j(x):=\langle b(x), \bb\rangle$. Since $b(x_0)=0$, we get $j(x_0)=0$.
The injectivity of $b$ and equation (\ref{equation2}) imply that $j$ must be injective.
Moreover, using equation (\ref{equation2}) we can prove 
\begin{equation*}
    k(x,y)=\frac{1}{1-j(x)\overline{j(y)}}\;\;\;\;(x,y\in X).
\end{equation*}

It remains to prove that $A:=j(X)\subset\D$ is a set of uniqueness for $H^2$.
On the contrary, suppose that $A$ is not. Then, we have
\begin{equation*}
\sum_{x\in X} (1-|j(x)|)<\infty.
\end{equation*}
Therefore, \cite[Theorem 15.21]{Rud} tells us that
\begin{equation*}
\theta(z):=\prod_{x\in X}\frac{j(x)-z}{1-\overline{j(x)}z}\frac{|j(x)|}{j(x)}\;\;\;\; (z\in\D)
\end{equation*}
converges and defines an inner function. 
We define a unital homomorphism $\pi: H^{\infty}(\D)\rightarrow \M$ by 
\begin{equation*}
\pi(\psi)=\psi\circ j\;\;\;\;(\psi\in H^{\infty}(\D)).
\end{equation*}
By \cite[Theorem 17.9]{Rud}, we have $\ker\pi=\theta H^{\infty}(\D)$.  
Since $H^{\infty}(\D)$ is isometrically equal to $\mathrm{Mult}(H^2)$, Lemma \ref{lemma:1-1} implies that $\pi$ is contractive. 
In fact, we have
\begin{equation*}
(1-\pi(\psi(x))\overline{\pi(\psi(y))})k(x,y)=\frac{1-\psi(j(x))\overline{\psi(j(y))}}{1-j(x)\overline{j(y)}}.
\end{equation*}
Moreover, Lemma \ref{lemma:1-5} implies that we can show that $H^{\infty}(\D)/\theta H^{\infty}(\D)$ is isometrically isomorphic to $\M$. Since the Gelfand transform of $\M$ is isometric, \cite[Theorem 4.2 (ii)]{Clo2015} shows that $\theta$ is an automorphism of $\D$, and hence $A$ is a singleton. This is a contradiction. Therefore, $A$ is a set of uniqueness for $H^2$.

Conversely, with a reproducing kernel $k$ given by equation (\ref{equation:1-1}), $H_k$ is a normalized irreducible complete Nevanlinna--Pick space. 
Since $H^{\infty}(\D)$ is completely isometrically equal to $\mathrm{Mult}(H^2)$ (see e.g., \cite[Example 4]{CT}), Lemma \ref{lemma:1-1} implies that a unital homomorphism
$\pi: H^{\infty}(\D)\rightarrow\M$ defined by
\begin{equation*}
\pi(\psi)=\psi\circ j\;\;\;\;(\psi\in H^{\infty}(\D))
\end{equation*}
is completely contractive. 
As an application of Lemma \ref{lemma:1-5}, we can show that $H^{\infty}(\D)/\ker \pi$ is completely isometrically isomorphic to $\M$. 
Since $j(X)$ is a set of uniqueness for $H^2$, $\ker\pi=\{0\}$. 
Hence, $H^{\infty}(\D)$ is completely isometrically isomorphic to $\M$. It is well known that the Gelfand transform of $H^{\infty}(\D)$ is completely isometric (see e.g., \cite[Example 4]{CT}). So we are done. 
\end{proof}

\begin{remark}\label{remark:Hartz}
\upshape Our idea of proving that equation (\ref{equation:1-1-1}) defines a contractive multiplier is to reduce the problem to a function algebra situation whose character space is well understood. 
However, Professor Michael Hartz gave us an idea to prove it directly without using such a reduction procedure.
We will give a sketch of his idea as follows.
A similar argument of equation (\ref{equation:3-3}) enables us to prove that $(\langle b(\cdot), e_1\rangle, \langle b(\cdot), e_2\rangle)$ is a row contraction on $H_k$.
Since all the characters of $\M$ are completely contractive, we have
\begin{equation*}
|\chi(\langle b(\cdot), e_1\rangle)|^2+|\chi(\langle b(\cdot), e_2\rangle)|^2\le 1
\end{equation*}
for every $\chi\in\Delta(\M)$. 
Hence, we obtain
\begin{align*}
|\chi(\langle b(\cdot), e_1\rangle + \frac{1}{2}\langle b(\cdot), e_2\rangle^2)|
&\le |\chi(\langle b(\cdot), e_1\rangle)|+\frac{1}{2}(1-|\chi(\langle b(\cdot), e_1\rangle)|^2)\le 1
\end{align*}
for every $\chi\in\Delta(\M)$ by a similar argument of the proof of Theorem \ref{theorem:1-1}.
Since the Gelfand transform of $\M$ is isometric, $\displaystyle \langle b(\cdot), e_1\rangle_{\E} + \frac{1}{2}\langle b(\cdot), e_2\rangle_{\E}^2$ is contractive.
\end{remark}

The following proposition tells us that an injection $j$ in Theorem \ref{theorem:1-1} is unique up to rotation:

\begin{prop}\label{proposition:2-5}
Let $X$ be a set with $x_0\in X$.
Let $j$ and $j'$ be injections from $X$ into $\D$ such that $j(x_0)=j'(x_0)=0$ and 
\begin{equation*}
\frac{1}{1-j(x)\overline{j(y)}}=\frac{1}{{1-j'(x)\overline{j'(y)}}}\;\;\;\;(x,y\in X).
\end{equation*}
Then, there exists an $\alpha\in\T=\{z\in\C\;|\;|z|=1\}$ such that
\begin{equation*}
j'(x)=\alpha j(x)
\end{equation*}
holds for every $x\in X$.
\end{prop}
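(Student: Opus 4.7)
The plan is to reduce the equality of kernels to a single multiplicative relation on the values of $j$ and $j'$, and then to isolate $\alpha$ using one reference point. First, I would rewrite the kernel identity: the equation
\begin{equation*}
\frac{1}{1-j(x)\overline{j(y)}}=\frac{1}{1-j'(x)\overline{j'(y)}}
\end{equation*}
is equivalent, after clearing denominators (valid since both kernels take values in $\C\setminus\{0\}$ on $\D$), to
\begin{equation*}
j(x)\overline{j(y)}=j'(x)\overline{j'(y)} \qquad (x,y\in X).
\end{equation*}

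Next I would extract two consequences. Setting $y=x$ gives $|j(x)|=|j'(x)|$ for all $x\in X$; in particular $j(x)=0$ if and only if $j'(x)=0$, and since $j$ is injective with $j(x_0)=0$, the unique point where $j$ (resp.\ $j'$) vanishes is $x_0$. For the nontrivial case pick any fixed $\lambda\in X\setminus\{x_0\}$; then $j(\lambda)\neq 0$ and $|j'(\lambda)|=|j(\lambda)|$, so
\begin{equation*}
\alpha:=\frac{j'(\lambda)}{j(\lambda)}\in\T.
\end{equation*}

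Finally, plugging $y=\lambda$ into the multiplicative identity yields $j(x)\overline{j(\lambda)}=j'(x)\overline{j'(\lambda)}$, so for any $x\in X\setminus\{x_0\}$,
\begin{equation*}
j'(x)=j(x)\frac{\overline{j(\lambda)}}{\overline{j'(\lambda)}}=\frac{1}{\overline{\alpha}}\,j(x)=\alpha\,j(x),
\end{equation*}
using $|\alpha|=1$. The equality $j'(x_0)=\alpha\,j(x_0)$ is trivial since both sides vanish, completing the argument. There is no real obstacle here; the only mild subtlety is making sure to choose the reference point $\lambda$ away from $x_0$ so that division by $j(\lambda)$ is legal, which is guaranteed by injectivity together with the normalization $j(x_0)=j'(x_0)=0$.
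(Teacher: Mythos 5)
Your proposal is correct and follows essentially the same route as the paper: both reduce the kernel identity to $j(x)\overline{j(y)}=j'(x)\overline{j'(y)}$, deduce $|j|=|j'|$, and extract a single unimodular constant using that $j$ vanishes only at $x_0$ (the paper defines pointwise phases $\alpha_x$ and shows they coincide, while you fix one reference point $\lambda$ and compute $\alpha$ directly, which is the same argument repackaged).
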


\begin{proof}
Obviously, $j(x)\overline{j(y)}=j'(x)\overline{j'(y)}$ holds for every $x,y\in X$. In particular, we have $|j(x)|=|j'(x)|$ for all $x\in X$. 
Thus, every $x\in X$ admits an $\alpha_x\in\T$ such that $j'(x)=\alpha_x j(x)$. Therefore, we have
\begin{equation*}
j(x)\overline{j(y)}=j'(x)\overline{j'(y)}=\alpha_x\overline{\alpha_y}j(x)\overline{j(y)}.
\end{equation*}
Since $j$ is injective and $j(x_0)=0$, we get $j(x)\ne 0$ for all $x\in X\setminus\{x_0\}$, and hence 
\begin{equation*}
\alpha_x=\alpha_y
\end{equation*}
holds for every $x,y\in X\setminus\{x_0\}$. As $j$ and $j'$ send $x_0$ to $0$, we complete the proof.
\end{proof}

\begin{remark}\label{remark:4-3}
\upshape Let $H_k$ be a complete Nevanlinna--Pick space on a set $X$ normalized at $x_0\in X$. Theorem \ref{universality} tells us that there exist a Hilbert space $\E$ and an injection $b:X\rightarrow\B_{\E}$ with $b(x_0)=0$ such that
\begin{equation*}
    k(x,y)=\frac{1}{1-\langle b(x), b(y)\rangle_{\E}}\;\;\;\;(x,y\in X).
\end{equation*}
Let $E=\{e_i\}_{i\in I}$ be an orthonormal basis for $\E$. Let $\A(E)\subset\mathrm{Mult}(H_{\E}^2)$ denote a norm closed unital operator algebra generated by $\{\langle b(\cdot), e_i\rangle_{\E}\;|\;i\in I\}$.
This is a generalization of the disk algebra $A(\D)$. 
Since $\displaystyle\langle b(\cdot), e_i\rangle+\frac{1}{2}\langle b(\cdot), e_j\rangle^2\;(i\ne j)$ falls in $\A(E)$, a part of the proof of Theorem \ref{theorem:1-1} still works if we replace $\M$ with $\A(E)$. In fact, we can prove that whenever the Gelfand transform of $\A(E)$ is isometric, there exists an injection $j:X\rightarrow \D$ such that $j(x_0)=0$ and 
\begin{equation*}
k(x,y)=\frac{1}{1-j(x)\overline{j(y)}}\;\;\;\;(x,y\in X).
\end{equation*}
However, $j(X)\subset\D$ does not become a set of uniqueness for $H^2$ in general even if the Gelfand transform of $\A(E)$ is completely isometric. 
We consider Arveson's example in \cite[page 204]{Arv} to see this fact. Let $\{\lambda_n\}_{n=1}^{\infty}$ be a countable dense subset of $\T$, and set $\mf_n=(1-n^{-2})\lambda_n\in \D$.
The Blaschke condition $\displaystyle\sum_{n=1}^{\infty}(1-|\mf_n|)<\infty$ yields the Blaschke product $\theta$ having a simple zero at each $\mf_n$ \cite[Theorem 15.21]{Rud}. 
Let $H_k$ be a complete Nevanlinna--Pick space on $X=\{\mf_n\}_{n=1}^{\infty}\subset\D$ whose reproducing kernel is
\begin{equation*}
k(z,w)=\frac{1}{1-z\overline{w}}\;\;\;\;(z,w\in X).
\end{equation*}
Then, \cite[Corollary 5.8]{PR} implies that
\begin{equation*}
U:H^2\ominus I(X)\rightarrow H_k,\;\;\;\;f\mapsto f|_X
\end{equation*}
is a unitary operator, where $I(X)=\{f\in H^2\;|\;f(z)=0\;(z\in X)\}$. We define a weak-$*$ closed ideal $\mfa\subset \M$ by $\mfa=\theta H^{\infty}(\D)$. The identity $\mfa H^2=I(X)$ immediately follows from \cite[Theorem 17.9]{Rud}. We set $\mcH_{\mfa}=H^2\ominus\mfa H^2$, which is a coinvariant subspace of $H^{\infty}(\D)$. It is not hard to check that
\begin{equation*}
U(P_{\mcH_{\mfa}}M_{\psi}P_{\mcH_{\mfa}})U^*=M_{\psi|_X}
\end{equation*}
holds for every $\psi\in H^{\infty}(\D)$. Here, $P_{\mcH_{\mfa}}:H^2\rightarrow \mcH_{\mfa}$ is the orthogonal projection into $\mcH_{\mfa}$. 
Moreover, Lemma \ref{lemma:1-5} implies that every $\Phi\in\bM_m(\C)\otimes \M$ admits a $\Psi\in\bM_m(\C)\otimes H^{\infty}(\D)$ such that $\Psi|_X=\Phi$ and $\|\Phi\|_{\bM_m(\C)\otimes\M}=\|\Psi\|_{\infty}$.
Therefore, $\M$ is completely isometrically isomorphic to $\mathcal{M}_{\mfa}=\{P_{\mcH_{\mfa}}M_{\psi}P_{\mcH_{\mfa}}\;|\;\psi\in H^{\infty}(\D)\}$. 
In particular, the norm closure of $\C[z]$ in $\M$, denoted by $\A$, is completely isometrically isomorphic to 
\begin{equation*}
\A_{\mfa}=\overline{\{P_{\mcH_{\mfa}}M_{q}P_{\mcH_{\mfa}}\;|\;q\in A(\D)\}}^{\mathrm{norm}}=\overline{\{P_{\mcH_{\mfa}}M_{q}P_{\mcH_{\mfa}}\;|\;q\in \C[z]\subset A(\D)\}}^{\mathrm{norm}}. 
\end{equation*}
Let $K\subset\overline{\D}$ be the closure of the zero set of $\theta$, along with the points on $\T$ contained in an arc across which $\theta$ cannot be continued holomorphically. The construction of $\theta$ implies $K\cap \T=\T$ (cf. \cite[page 204]{Arv}). 
Thus, the Gelfand transform of $\A$ is completely isometric as mentioned in \cite[Example 4]{CT}. 
However, $X$ is not a set of uniqueness for $H^2$ because $X=\{\mf_n\}_{n=1}^{\infty}$ satisfies the Blaschke condition $\displaystyle\sum_{n=1}^{\infty}(1-|\mf_n|)<\infty$. 
If $j:X\rightarrow \D$ is an injection such that $j(x_0)=0$ and
\begin{equation*}
k(z,w)=\frac{1}{1-j(z)\overline{j(w)}}\;\;\;\;(z,w\in X),
\end{equation*}
then Proposition \ref{proposition:2-5} implies that $j$ is rotation, and hence $j(X)$ is not a set of uniqueness for $H^2$.
\end{remark}

\section*{acknowledgement}
The author acknowledges Professor Michael Hartz for his comments to a draft version of this paper. In particular, Remark \ref{remark:Hartz} is due to his idea. The author also acknowledges Professor Narutaka Ozawa for inviting him to Kyoto University, where some part of this work was done.
The author is grateful to Professor Yoshimichi Ueda for his comments on the draft of this paper. This work was supported by JSPS Research Fellowship for Young Scientists (KAKENHI Grant Number JP 23KJ1070).

\end{document}